\documentclass[11pt]{article}

\usepackage[utf8]{inputenc}
\usepackage[a4paper,margin=1.08in]{geometry}
\usepackage{amsmath,amssymb,amsthm,mathtools}
\usepackage{microtype}
\usepackage{enumitem}
\usepackage{hyperref}
\usepackage[nameinlink,noabbrev]{cleveref}
\usepackage{amsmath}
\hypersetup{
  colorlinks=true,
  linkcolor=blue,
  citecolor=blue,
  urlcolor=blue,
  pdftitle={A Pogorelov-type counterexample to the openness and discreteness of gradient mappings},
  pdfauthor={Deguang Zhong},
  pdfsubject={A Pogorelov-type counterexample to the openness and discreteness of gradient mappings},
  pdfkeywords={A,B}
}

\newtheorem{theorem}{Theorem}[section]

\theoremstyle{remark}
\newtheorem{remark}[theorem]{Remark}

\theoremstyle{definition}

\newtheorem{question}[theorem]{Question}

\title{A Pogorelov-type counterexample to the discreteness and  openness of gradient mappings}
\author{%
Deguang Zhong\textsuperscript{1,*}\\
\quad
\small\textsuperscript{1}Institute of Applied Mathematics, Shenzhen Polytechnic University,
Shenzhen 518055,  China
}
\date{}

\begin{document}
\maketitle

\begingroup
\renewcommand{\thefootnote}{\fnsymbol{footnote}}
\footnotetext[1]{E-mail addresses: huachengzhon@163.com.}
%\footnotetext[2]{Corresponding authors. E-mail addresses:
%\href{mailto:huangmeilan0724@163.com}{\texttt{huangmeilan0724@163.com}}}
\endgroup

\begin{abstract}
Let $\Omega\subset\mathbb{R}^{n}$ be a domain, and suppose that  $u\in W^{2,n}_{loc}(\Omega)$ satisfies the following ineqiality
\[
    \det D^2u\geq \delta>0\qquad\text{a.e. in }\Omega.
\]
A question of Guerra--Tione \cite[Question 5.5]{GuerraTione} asks whether the gradient mapping $Du$ must be open and discrete. In this paper, we give an explicit Pogorelov-type
 construction showing that the answer is negative in every dimension $n\geq4$: there exists $u\in W^{2,n}_{loc}(\Omega)$ satisfying the above lower bound for the Hessian determinant, with $D^2u>0$ a.e., such that $Du$ collapses an entire line segment to a single point and hence is not discrete. We also show that the same construction has a logarithmic divergence when $n=3$ and therefore does not directly settle the
three-dimensional case.
\end{abstract}

\medskip
\noindent\textbf{2020 Mathematics Subject Classification.}
Primary 30C65; Secondary 35J96, 46E35.

\smallskip
\noindent\textbf{Key words and phrases.}
Mappings of finite distortion,  discrete and open mappings, Pogorelov-type counterexample, Monge--Amp\`ere equation, critical Sobolev regularity.

\section{The problem and the main result}
 Let $\Omega\subset\mathbb{R}^{n}$ be a domain.    Recently, Guerra and Tione \cite{GuerraTione} obtained the following result. 
 \begin{theorem}{\rm\cite[Corollary 5.4]{GuerraTione}}\label{ahxjkl} 
 Suppose that $u\in W^{2,np}_{loc}(\Omega)$ satisfies the following ineqiality
\[
    \det D^2u\geq \delta>0\qquad\text{a.e. in }\Omega.
\]
Then $ind(D^2 u)$ is a.e.\ constant. Here, $p>n-1 \text{ if } n>2$ and $p\geq 1 \text{ if } n=2.$ 
\end{theorem}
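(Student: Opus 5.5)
The plan is to view $f=Du$ as a Sobolev map of finite distortion and use topological degree theory. Since $u\in W^{2,np}_{loc}$, we have $f\in W^{1,np}_{loc}(\Omega;\mathbb{R}^n)$ with $np>n(n-1)$, and $J_f=\det D^2u\ge\delta>0$ a.e. The distortion inequality $|Df|^n\le K J_f$ holds with $K=|D^2u|^n/\delta$. Because $|D^2u|\in L^{np}_{loc}$, we get $K\in L^{p}_{loc}$ with $p>n-1$ when $n>2$, and $p\ge1$ when $n=2$.

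These are exactly the hypotheses of the Villamor--Manfredi and Hencl--Koskela theorem on mappings of finite distortion. That theorem says a nonconstant $W^{1,n}$ map with $J_f>0$ a.e. and $K\in L^{p}$ in this range is continuous, sense-preserving, discrete and open. Moreover, the Lusin (N) condition holds, and the branch set $B_f$ has empty interior, in fact measure zero. Consequently $f$ is a local homeomorphism on the open dense set $\Omega\setminus B_f$, and $\Omega\setminus B_f$ is connected: the branch set of a discrete open map has topological dimension at most $n-2$, so it does not separate a domain.

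Next I would transfer the index from $D^2u$ to $f$. At a.e. point $x$, $f$ is differentiable with $Df(x)=D^2u(x)$ symmetric and nonsingular. Away from $B_f$, I would like to show that the index of $D^2u$ is locally constant on the open set $\Omega\setminus B_f$, where $f$ is a homeomorphism $U\to f(U)$. The inverse is again a mapping of finite distortion, hence $W^{1,1}$. In dimensions $n\ge3$, the relevant rigidity fact is that Sobolev gradients of a function (a curl-free field) whose Hessian is symmetric and definite in signature a.e. cannot switch signature across a set of positive measure without violating the rank-one (Hadamard) compatibility. Two symmetric matrices of different index, both with determinant $\ge\delta$, are separated in the set $\{\det\ge\delta\}\cap\mathrm{Sym}(n)$: the connected components of this set are exactly the index classes. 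So I would then use a Šverák/Müller-type argument. Gradients of $W^{1,np}$ maps with $np>n(n-1)$ taking values a.e. in a union of disjoint closed sets $K_1\cup\dots\cup K_m$ that are "quasiconvexly separated" must lie a.e. in a single $K_i$ on a connected domain. Rank-one separation is provided by a Jacobian-type quasiaffine function, here $\det$ restricted to the index components, combined with the degree.

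The hardest step is this final separation. My first attempt is to use the local degree: on a ball $B\Subset\Omega\setminus B_f$, $\deg(f,B,y)=\pm1$ is constant, and by the change of variables (valid by (N)) the sign of $\det Df$ is constant a.e. on $B$. That sign only detects the parity of the index, though, so finer information is needed. For that I would apply the separation result of Guerra--Tione directly to the differential inclusion $D^2u\in\{\det\ge\delta\}$, using that the integrability $np>n(n-1)$ makes the minors continuous under weak convergence. That yields a.e. constancy of the index on balls, and connectedness of $\Omega$ propagates it globally.
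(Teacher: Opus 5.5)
The paper does not prove this statement. It quotes it from Guerra--Tione and only says that their proof goes through the Reshetnyak-type theorem applied to $f=Du$. Your first step reproduces that ingredient correctly. Since $K_f\le\delta^{-1}|D^2u|^n\in L^p_{loc}$ and $Du$ is nonconstant, $Du$ is continuous, open and discrete. It is a local homeomorphism off a closed branch set $B_f$ of measure zero and topological dimension at most $n-2$, so $\Omega\setminus B_f$ is connected.

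The gap is exactly the step you flag as hardest: nothing in the proposal shows that $\mathrm{ind}\,D^2u$ is locally constant on $\Omega\setminus B_f$, and each mechanism you offer fails.
\begin{itemize}
\item The degree argument carries no information, not even parity. The hypothesis already gives $\det D^2u\ge\delta>0$ a.e., so the local degree is $+1$ whatever the (even) index is.
\item Separation by ``$\det$ restricted to the index components'' is impossible, because every index class lies inside $\{\det\ge\delta\}$. Weak continuity of minors therefore buys nothing here.
\item The ``quasiconvexly separated'' principle is asserted without a separating function or a splitting of the symmetric quasiconvex hull, and it is applied to unbounded sets and non-Lipschitz gradients.
\item The absence of rank-one connections between index classes is true (eigenvalue interlacing plus evenness of the index), but it is not by itself a rigidity theorem. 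Hadamard compatibility does not help either, since it concerns interfaces of piecewise smooth maps.
\item Invoking ``the separation result of Guerra--Tione'' is circular, since that is the statement to be proved.
\end{itemize}

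What is missing is a topological invariant of $Du$ near a point that records the whole index, not just the sign of the Jacobian. One way to supply it is as follows.
\begin{itemize}
\item Take a ball $B\subset\Omega\setminus B_f$ on which $Du$ is injective. For $y\in B$, the $C^1$ function $v_y(x)=u(x)-Du(y)\cdot x$ has $y$ as its only critical point in $B$.
\item By continuation invariance, its critical groups (local Morse homology, equivalently the Conley index) do not change as $y$ moves in $B$.
\item At a.e.\ $y$, $Du$ is differentiable with $\det D^2u(y)\neq0$. There the rescalings $r^{-2}\bigl(v_y(y+rz)-v_y(y)\bigr)$ converge in $C^1(\overline{B_1})$ to $\tfrac12\langle D^2u(y)z,z\rangle$. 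The critical groups of this quadratic form are $\mathbb{Z}$ in degree $\mathrm{ind}\,D^2u(y)$ and zero otherwise.
\end{itemize}
This makes the index a.e.\ locally constant on $\Omega\setminus B_f$, and connectedness of $\Omega\setminus B_f$ then gives the global statement. Without some argument of this kind, your proposal stops short of the actual content of the theorem.
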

Theorem \ref{ahxjkl}  proves in particular a conjecture made by \v{S}ver\'ak in \cite[p.\ 297]{Sver92} which stated that 
if the Hessian of a $C^{1,1}$ function has uniformly positive determinant almost everywhere then its index is locally constant. 
In the proof of \cite{GuerraTione}, the authors employ a Reshetnyak‑type theorem from \cite{HK14,IS93, VM98} for non‑constant finite‑distortion mappings $f\in W^{1,n}_{\mathrm{loc}}(\Omega,\mathbb{R}^n)$. This theorem asserts that $f$ is discrete and open provided $K_f\in L^p_{\mathrm{loc}}(\Omega)$, where $p>n-1$ for $n>2$ and $p\geq 1$ for $n=2$. Here, a mapping $f$ is called \emph{discrete} if, for every $y\in\mathbb{R}^n$, the fiber
$f^{-1}(y)$ is a discrete subset of $\Omega$. A map $f\colon \Omega\to \mathbb{R}^{n}$ is an \emph{open mapping} if for every open set $U\subset \Omega$, the image $f(U)$ is open in $\mathbb{R}^{n}$. A mapping $f\in W^{1,n}_{loc}(\Omega,\mathbb{R}^n)$ is said to be a \textit{map of finite distortion} \cite{HK14} if there is a measurable function $K\colon \Omega\to [0,+\infty]$ such that $|D f(x)|^n \leq K(x) \det D f(x)$ and $K(x)<\infty$ for a.e.\ $x\in \Omega$. For such maps, we define the \textit{distortion function}
$$K_f(x) \equiv \begin{cases} \frac{|D f(x)|^n}{\det D f(x)} & {\rm if } \det D f(x)\neq 0,\\
1 &{\rm otherwise.}
\end{cases}$$

Suppose that $u\in W^{2,n}_{loc}(\Omega)$ satisfies 
$
    \det D^2u\geq \delta>0$
 for $\text{a.e. in }\Omega.$ Let $f=Du.$ Then $f\in W_{\mathrm{loc}}^{1,n}(\Omega,\mathbb{R}^n), $  $Df=D^2u,$ and the distortion function of $f$ is defined by
\begin{equation}\label{eq:distortion-function}
    K_f(x)
    =
    \frac{|Df(x)|^n}{J_f(x)}
    =
    \frac{|D^2u(x)|^n}{\det D^2u(x)}.
\end{equation}
Since
\[
    \det D^2u(x)\geq\delta>0
    \qquad\text{for a.e. }x\in\Omega,
\]
we have
\begin{equation}\label{eq:integrable-distortion}
    K_f(x)
    \leq
    \delta^{-1}|D^2u(x)|^n.
\end{equation}
Consequently, $
    K_f\in L_{\mathrm{loc}}^1(\Omega).$ In dimension two,   a classical result of Iwanec and  \u{S}ver\'{a}k \cite{IS93} guarantee that the condition
$ K_f\in L_{\mathrm{loc}}^1$ is sufficient for a nonconstant mapping of finite distortion is open and discrete. In dimensions $n\geq3$, however, the corresponding
 openness and discreteness theorem due to  Villamor and  Manfredi \cite{VM98}  typically requires the stronger integrability condition $K_f\in L_{\mathrm{loc}}^p(\Omega),$ $ p>n-1.$
Actually, by a result of Hencl and Rajala \cite{HR13},  the discreteness is false even for Lipschitz mapping  with finite distortion belongs to $L_{loc}^{n-1}(\Omega).$
To compensate for the lack of higher integrability of the distortion and improve the higher-dimensional threshold from
$p>n-1$ to the endpoint $p=1.$ A additional structural property of $f$ that it is a gradient mapping was posed.  This leads to  the following interesting question posed by Guerra and Tione \cite[Question 5.5]{GuerraTione}.
\medskip
\begin{question}\cite[Question 5.5]{GuerraTione}\label{adgjak}
Let $\Omega\subset\mathbb{R}^n$ be a domain and suppose that
\begin{equation}\label{eq:assumption}
    u\in W^{2,n}_{loc}(\Omega),
    \qquad
    \det D^2u(x)\geq\delta>0
    \quad\text{for a.e. }x\in\Omega.
\end{equation}
Is the gradient mapping
\[
    f=Du:\Omega\longrightarrow\ \mathbb{R}^n
\]
necessarily open and discrete?
\end{question}
In this paper, we show that Question \ref{adgjak} is negative in every dimension $n\geq4.$ It is read as follows.
\begin{theorem}\label{sjklksx}
  Suppose that $n\geq4.$ Then, there exists a domain $\Omega\subseteq\mathbb{R}^{n}$ and a mapping $u:\Omega\rightarrow \mathbb{R},$ such that 
$ u\in W_{\mathrm{loc}}^{2,q}(\Omega)$
    for every
    $1\leq q<n(n-1)/2,$ $ u\notin
    W_{\mathrm{loc}}^{2,n(n-1)/2}(\Omega),$ and satisfies the following inequality 
\begin{equation}\label{hxalxa}
 \det D^2u\geq\delta>0
    \quad\text{a.e. in }\Omega.
\end{equation}
But $Du$ is not open and discrete.
\end{theorem}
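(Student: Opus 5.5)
Write $x=(x',x_n)$ with $x'\in\mathbb{R}^{n-1}$, $r=|x'|$, and take the classical Pogorelov ansatz
\[
u(x)=r^{\alpha}\,g(x_n)+r^{2}h(x_n),
\qquad\text{or more simply}\qquad u(x)=|x'|^{\alpha}(1+x_n^{2}),
\]
with $\alpha=2-\tfrac{2}{n}$ chosen to balance the determinant. The domain will be the cylinder $\Omega=B'_\rho\times(-\rho,\rho)$ for small $\rho$. On the segment $L=\{x'=0\}\cap\Omega$ we have $u\equiv0$ and $Du\equiv0$ because $\alpha>1$. So $Du$ maps the whole segment $L$ to the origin, the fiber $(Du)^{-1}(0)$ is not discrete, and $Du$ fails to be open and discrete. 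Everything else is about choosing the exponent so that the determinant stays bounded below while the Sobolev regularity is as claimed.

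\textbf{Step 1 (Hessian and determinant).} In the radial/tangential/vertical frame, $D^2u$ has the following entries.
\begin{itemize}
\item Radial second derivative: $\alpha(\alpha-1)r^{\alpha-2}g$.
\item $n-2$ tangential eigenvalues: $\alpha r^{\alpha-2}g$.
\item Vertical entry: $r^{\alpha}g''$.
\item Mixed entry: $\alpha r^{\alpha-1}g'$.
\end{itemize}
The $2\times2$ radial–vertical block has determinant
\[
r^{2\alpha-2}\bigl(\alpha(\alpha-1)gg''-\alpha^{2}g'^{2}\bigr).
\]
Hence
\[
\det D^2u=c\,r^{(\alpha-2)(n-2)+2\alpha-2}\,\Phi(x_n),
\qquad \Phi=g^{n-2}\bigl(\alpha(\alpha-1)gg''-\alpha^2 g'^2\bigr).
\]
The exponent vanishes exactly when $\alpha n=2n-2$, i.e.\ $\alpha=2-2/n$. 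With $g=1+x_n^2$ and $|x_n|$ small, $\Phi$ is close to $2\alpha(\alpha-1)>0$, since $\alpha>1$ for $n\ge3$. So $\det D^2u\ge\delta>0$ a.e. I will also check that all principal minors are positive, giving $D^2u>0$ a.e. The term $r^2h$ is kept only as a fallback if a positivity correction is needed.

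\textbf{Step 2 (integrability).} The largest entries are of size $r^{\alpha-2}=r^{-2/n}$. The entry $r^\alpha$ is bounded and the mixed entry $r^{\alpha-1}$ is even better. Integrating in the $(n-1)$-dimensional variable $x'$ gives
\[
|D^2u|^q\in L^1_{loc}\iff \int_0 r^{-2q/n}\,r^{n-2}\,dr<\infty \iff q<\tfrac{n(n-1)}{2}.
\]
The integral diverges at the endpoint $q=n(n-1)/2$, which gives $u\notin W^{2,n(n-1)/2}_{loc}$. I must also verify that the distributional second derivatives coincide with the pointwise ones across $L$. Since $L$ has codimension $n-1\ge2$ and $Du$ is continuous with $D^2u\in L^1_{loc}$, a cutoff/capacity argument shows $Du\in W^{1,1}_{loc}$, and the pointwise formula for the Hessian holds.

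\textbf{Step 3 (dimension condition and main obstacle).} For $W^{2,n}$ we need $n<n(n-1)/2$, i.e.\ $n>3$, which is exactly where $n\ge4$ enters. For $n=3$ the threshold equals $n$, which is the logarithmic divergence mentioned in the abstract. The main obstacle I anticipate is Step 1: making sure the determinant is \emph{uniformly} bounded below and that $D^2u$ is positive definite everywhere off $L$. The sign of $\alpha(\alpha-1)gg''-\alpha^2g'^2$ must be controlled, which forces restricting $|x_n|$ small so that $g'^2\ll gg''$. If this fails for the chosen $g$, I would first try adjusting $g$, for example $g=1+x_n^2$ with the height of $\Omega$ shrunk, or $g=e^{x_n^2}$, rather than changing $\alpha$.
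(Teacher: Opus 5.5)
Your proposal is correct and follows the paper's route. Both use the Pogorelov ansatz $|x'|^{2-2/n}g(x_n)$, where the powers of $r$ cancel in $\det D^2u$, the $r^{-2/n}$ count gives the threshold $n(n-1)/2$, and the axis collapses to $0$; the only real difference is your profile $g=1+x_n^2$ instead of the paper's $e^{x_n^2}$, and both keep $(\alpha-1)gg''-\alpha g'^2>0$ for small $|x_n|$.

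Your check that the pointwise Hessian is the distributional one across the axis, which has codimension $n-1$, is a detail the paper skips. Conversely, the paper also proves non-openness: $Du(\Omega)$ meets the $x_n$-axis only at $0$, because $D_{x'}u=\alpha r^{\alpha-2}g\,x'$ vanishes only for $x'=0$, where $u_{x_n}=0$. That argument works verbatim for your $g>0$.
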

\begin{remark}
Theorem \ref{sjklksx} provides only a necessary lower bound for a possible regularity
threshold. It does not prove that $q\geq q_*:=n(n-1)/2$ is sufficient for the openness or discreteness of $Du$. For the first few relevant dimensions, the regularity of the
counterexample constructed in Section~\ref{ajxakjx} is as follows:
\[
\begin{array}{c|c|c}
    n
    &
    n(n-1)/2
    &
    \text{Sobolev regularity of the counterexample}
    \\ \hline
    4
    &
    6
    &
    u\in W_{\mathrm{loc}}^{2,q}
    \quad\text{for every }q<6
    \\
    5
    &
    10
    &
    u\in W_{\mathrm{loc}}^{2,q}
    \quad\text{for every }q<10
    \\
    6
    &
    15
    &
    u\in W_{\mathrm{loc}}^{2,q}
    \quad\text{for every }q<15.
\end{array}
\]
Thus, the construction in the proof of Theorem \ref{sjklksx} is substantially stronger than a counterexample
belonging only to $W_{\mathrm{loc}}^{2,n}$
\end{remark}
The rest of the paper is organized as follows. In Section~\ref{ajxakjx},  we will given an example which  is a simple explicit variant of the classical
Pogorelov example for the Monge--Amp\`ere equation; see \cite{CollinsMooney}, \cite[Section~3.2]{FigalliBook} and
\cite[Section~4.4]{MooneySurvey}. The classical example has the form
\[
    u(x',x_n)=|x'|^{2-2/n}h(x_n),
\]
where $h$ is chosen so that the Hessian determinant is strictly positive, or even identically equal to one. For computational
convenience, we take here the explicit local profile $h(t)=e^{t^2}.$  In section \ref{dcgusxhij}, it is shown that the Hessian and its determinant of the counterexample constructed  in Section~\ref{ajxakjx} satisfy  the condition (\ref{hxalxa}). The critical Sobolev exponent $p_*=n(n-1)/2$ is the classical critical exponent of the Pogorelov example; see \cite{CollinsMooney,MooneySurvey}. Hence, in Section \ref{jsxk;},
we show that the counterexample $u$ constructed in Section~\ref{ajxakjx} satisfying that $ u\in W_{\mathrm{loc}}^{2,q}(\Omega)$ for every $ 1\leq q<n(n-1)/2$ while $u\notin  W_{\mathrm{loc}}^{2,q_*}(\Omega).$
In Section \ref{dgjhkls}, the gradient mapping $Du$ constructed in Section~\ref{ajxakjx} is showed to be neither discrete nor open. The proof of Theorem~\ref{sjklksx} is given in Section \ref{scjsk}.
The last section is devoting to show that the same construction has a logarithmic divergence when $n=3.$ Hence, Question 5.5 of Guerra and Tione in \cite{GuerraTione} is still open in the case that $n=3.$

\section{Construction of the counterexample}\label{ajxakjx}
Fix $n\geq4$. Write points of $\mathbb{R}^n$ as
\[
    x=(x',t)\in \mathbb{R}^{n-1}\times \mathbb{R},
    \qquad r=|x'|,
\]
and set
\begin{equation}\label{eq:alpha}
    \alpha=2-\frac{2}{n}.
\end{equation}
Since $n\geq4$, we have
\[
    1<\alpha<2.
\]

Let
\begin{equation}\label{eq:def-u}
    h(t)=e^{t^2},
    \qquad
    u(x',t)=r^\alpha h(t)=|x'|^{2-2/n}e^{t^2}.
\end{equation}
We take the domain to be
\begin{equation}\label{eq:domain}
    \Omega=\mathbb{B}^{n-1}(0,1)\times(-\varepsilon,\varepsilon),
\end{equation}
where $\varepsilon>0$ will be chosen below.

\section{The Hessian and its determinant}\label{dcgusxhij}

We first perform the classical computations in the region $r>0$. From
\eqref{eq:def-u},
\begin{align}
    u_r&=\alpha r^{\alpha-1}h,
    &u_{rr}&=\alpha(\alpha-1)r^{\alpha-2}h,
    \label{eq:radial}\\
    u_{rt}&=\alpha r^{\alpha-1}h',
    &u_{tt}&=r^\alpha h''.
    \label{eq:mixed}
\end{align}

Let
\[
    e_r=\frac{x'}{|x'|}\in\mathbb{R}^{n-1}.
\]
In every direction $\tau\perp e_r$ tangent to the sphere in the $x'$ variables,
the corresponding eigenvalue of the Hessian is
\begin{equation}\label{eq:tangential-eigenvalue}
    \lambda_{\mathrm{tan}}
    =\frac{u_r}{r}
    =\alpha r^{\alpha-2}h,
\end{equation}
with multiplicity $n-2$. On the two-dimensional subspace spanned by $e_r$ and
the $t$ direction, the Hessian is represented by
\begin{equation}\label{eq:radial-block}
    M(r,t)=
    \begin{pmatrix}
        \alpha(\alpha-1)r^{\alpha-2}h
        &\alpha r^{\alpha-1}h'\\
        \alpha r^{\alpha-1}h'
        &r^\alpha h''
    \end{pmatrix}.
\end{equation}
Consequently,
\begin{align}
    \det D^2u
    &=\left(\alpha r^{\alpha-2}h\right)^{n-2}\det M(r,t)\notag\\
    &=\left(\alpha r^{\alpha-2}h\right)^{n-2}
      \left\{
      \alpha r^{2\alpha-2}
      \left[(\alpha-1)hh''-\alpha(h')^2\right]
      \right\}\notag\\
    &=\alpha^{n-1}
      r^{(\alpha-2)(n-2)+2\alpha-2}
      h^{n-2}
      \left[(\alpha-1)hh''-\alpha(h')^2\right].
      \label{eq:det-general}
\end{align}
Since $\alpha=2-2/n$, the exponent of $r$ satisfies
\begin{align}
    (\alpha-2)(n-2)+2\alpha-2
    &=-\frac{2}{n}(n-2)+2\left(2-\frac{2}{n}\right)-2=0.
\end{align}
Thus, the powers of $r$ cancel completely, and we obtain the key identity
\begin{equation}\label{eq:det-key}
    \det D^2u
    =\alpha^{n-1}h^{n-2}
      \left[(\alpha-1)hh''-\alpha(h')^2\right].
\end{equation}
For $h(t)=e^{t^2}$, we have
\begin{equation}
h'(t)=2t h(t),
    \qquad
    h''(t)=(2+4t^2)h(t).
\end{equation}
Hence
\begin{align}
    (\alpha-1)hh''-\alpha(h')^2
    &=h^2\left[(\alpha-1)(2+4t^2)-4\alpha t^2\right]\notag\\
    &=h^2\left[2(\alpha-1)-4t^2\right].
    \label{eq:bracket}
\end{align}
Substitution into \eqref{eq:det-key} yields
\begin{equation}\label{eq:det-explicit}
    \det D^2u(x',t)
    =\alpha^{n-1}e^{nt^2}
      \left[2(\alpha-1)-4t^2\right]
\end{equation}
for every $r>0$. We now choose
\begin{equation}\label{eq:epsilon}
    0<\varepsilon<\sqrt{\frac{\alpha-1}{4}}.
\end{equation}
If $|t|<\varepsilon$, then
\[
    2(\alpha-1)-4t^2\geq\alpha-1>0.
\]
Since $e^{nt^2}\geq1$, it follows that
\begin{equation}\label{eq:det-lower}
    \det D^2u(x',t)
    \geq \alpha^{n-1}(\alpha-1)
    =:\delta>0
\end{equation}
at every point of $\Omega\cap\{r>0\}$. The singular axis
\[
    \Sigma=\{0\}\times(-\varepsilon,\varepsilon)=\{(0,\ldots,0,t)\in\mathbb{R}^{n}: |t|<\varepsilon\}
\]
 is a line segment with length \(2\varepsilon\), and is a 1-dimensional set. Hence, $\Sigma$ has zero $n$-dimensional Lebesgue measure, and so \eqref{eq:det-lower} holds
almost everywhere in $\Omega$.

Moreover, the eigenvalue in \eqref{eq:tangential-eigenvalue} is positive, the
upper-left entry of $M(r,t)$ is positive, and \eqref{eq:bracket} gives
$\det M(r,t)>0$. Sylvester's criterion therefore yields
\begin{equation}\label{eq:positive-definite}
    D^2u(x',t)>0
    \qquad (r>0,\ |t|<\varepsilon).
\end{equation}
Thus, the Hessian of the counterexample is positive definite almost everywhere.
\begin{remark}
Strictly speaking, to refute Question \ref{adgjak}, it is not necessary to prove that $D^2 u>0$ a.e. The assumptions of Question \ref{adgjak} only require $u\in W^{2,n}_{\mathrm{loc}}(\Omega)$. 
The Hessian of the counterexample is positive definite almost everywhere implies that the actual cause of the collapse is not the sign change of the Hessian, but the critical low regularity on the singular axis.
\end{remark}
\section{Critical Sobolev regularity}\label{jsxk;}
In this section, we show that when $\varepsilon>0$ is sufficiently small, then $u\in W_{\mathrm{loc}}^{2,q}(\Omega)$ for every
    $1\leq q<n(n-1)/2,$
whereas $u\notin W_{\mathrm{loc}}^{2,n(n-1)/2}(\Omega).$ We prove this assert by two steps.

$\mathbf{Step}\;\;\mathbf{1}:$ Set $ r=|x'|. $ Then, we first show that near the singular axis $ \Sigma=\{0\}\times(-\varepsilon,\varepsilon),$
the most singular components of the Hessian have order $ r^{-2/n}.$ More precisely, on every compact subcylinder $\Omega'\Subset\Omega$
that intersects $\Sigma$, there exist constants $c,C>0$ such that
\begin{equation}\label{eq:hessian-two-sided}
    c\,r^{-2/n}
    \leq
    |D^2u(x',t)|
    \leq
    C\,r^{-2/n}
\end{equation}
whenever $r>0$ is sufficiently small. To prove this assert, we first compute the norm of $D^2u$. It is worth noting that there are two kinds of norms for a matrix $A$ we can use here.
They are the Hilbert--Schmidt norm $|A|_{\mathrm{HS}}$ defined by 
\begin{equation}
|A|_{\mathrm{HS}}=\left(\sum_{i,j=1}^n |A_{ij}|^2\right)^{1/2}
\end{equation}
and operator norm $|A|_{\mathrm{op}}$ defined by $\sup_{|\eta|=1}|A\eta|.$ Since the Hilbert--Schmidt norm and the operator norm are
equivalent in finite dimensions: $|A|_{\mathrm{op}}\leq|A|_{\mathrm{HS}} \leq \sqrt{n}\,|A|_{\mathrm{op}}.$
Consequently, the local integrability and the singular order of the
Hessian do not depend on which of these two norms is used.  Let
\[
    u(x',t)=r^\alpha h(t),
    \qquad
    r=|x'|,
    \qquad
    \alpha=2-\frac{2}{n}.
\]
At every point with $r>0$, choose an orthonormal basis
\[
    \{\tau_1,\ldots,\tau_{n-2},e_r,e_t\},
\]
where $\tau_1,\ldots,\tau_{n-2}$ are tangent to the sphere in the
$x'$ variables, $e_r=x'/r$, and $e_t$ is the unit vector in the
$t$ direction. In this basis, the Hessian has the block form
\begin{equation}\label{eq:hessian-orthogonal-block}
    D^2u
    =
    \begin{pmatrix}
        \lambda_{\mathrm{tan}}I_{n-2} & 0 & 0\\
        0 & u_{rr} & u_{rt}\\
        0 & u_{rt} & u_{tt}
    \end{pmatrix},
\end{equation}
where 
\begin{equation}\label{schxakcs}
\lambda_{\mathrm{tan}}=u_r/r=\alpha r^{\alpha-2}h, u_{rr}=\alpha(\alpha-1)r^{\alpha-2}h, u_{rt}=\alpha r^{\alpha-1}h', u_{tt}=r^\alpha h''.
\end{equation}
It follows from \eqref{eq:hessian-orthogonal-block} that
\begin{equation}\label{eq:hessian-hs-basic}
    |D^2u|_{\mathrm{HS}}^2
    =
    (n-2)\lambda_{\mathrm{tan}}^2
    +
    u_{rr}^2
    +
    2u_{rt}^2
    +
    u_{tt}^2.
\end{equation}
Substituting (\ref{schxakcs}) into
\eqref{eq:hessian-hs-basic}, we obtain
\begin{align}
    |D^2u|_{\mathrm{HS}}^2
    &={}
    (n-2)\alpha^2r^{2\alpha-4}h^2
    +
    \alpha^2(\alpha-1)^2r^{2\alpha-4}h^2
    \notag+2\alpha^2r^{2\alpha-2}(h')^2
    +
    r^{2\alpha}(h'')^2\\
    & =
    r^{\alpha-2}
    \biggl[
        \alpha^2
        \bigl((n-2)+(\alpha-1)^2\bigr)h^2
        +
        2\alpha^2r^2(h')^2
        +
        r^4(h'')^2
    \biggr]^{1/2}
    \label{eq:hessian-hs-expanded}
\end{align}
Since  $\alpha-2=-2/n, h(t)=e^{t^2},$ we have $h'(t)=2te^{t^2}$ and $h''(t)=(2+4t^2)e^{t^2}.$ Substitution into \eqref{eq:hessian-hs-expanded} gives
\begin{align}
    |D^2u|_{\mathrm{HS}}^2
    =
    ^{t^2}r^{-2/n}
    \biggl[
        \alpha^2
        \bigl((n-2)+(\alpha-1)^2\bigr)+
        8\alpha^2t^2r^2
        +
        (2+4t^2)^2r^4
    \biggr]^{1/2}.
    \label{eq:hessian-hs-explicit-square}
\end{align}
We next derive a two-sided estimate from this exact formula. On the
cylinder
\[
    |t|<\varepsilon,
    \qquad
    0<r<1,
\]
the first term inside the square brackets in
\eqref{eq:hessian-hs-explicit-square},
\[
    \alpha^2
    \bigl((n-2)+(\alpha-1)^2\bigr),
\]
is a strictly positive constant, while all the remaining terms are
nonnegative. It follows that
\[
    |D^2u(x',t)|_{\mathrm{HS}}
    \geq
    c\,e^{t^2}r^{-2/n}
    \geq
    c\,r^{-2/n}.
\]
On the other hand, since $|t|<\varepsilon$ and $0<r<1$, all the
terms inside the square brackets are uniformly bounded. Thus,
\[
    |D^2u(x',t)|_{\mathrm{HS}}
    \leq
    C r^{-2/n}.
\]
Consequently,
\begin{equation}\label{eq:hessian-two-sided-estimate}
    c\,r^{-2/n}
    \leq
    |D^2u(x',t)|_{\mathrm{HS}}
    \leq
    C r^{-2/n}
\end{equation}
near the singular axis. 

$\mathbf{Step}\;\;\mathbf{2}:$   Since $x'\in\mathbb{R}^{n-1}$, polar coordinates in the $x'$ variables
give $dx'=r^{n-2}\,dr\,d\sigma.$
Consequently,
\begin{align}
    \int_{\{|x'|<\rho\}}
    |D^2u(x',t)|^q\,dx'
    &\asymp
    \int_0^\rho
    r^{-2q/n}r^{n-2}\,dr \notag\\
    &=
    \int_0^\rho
    r^{n-2-\frac{2q}{n}}\,dr.
    \label{eq:q-integrability}
\end{align}
The integral in \eqref{eq:q-integrability} is finite if and only if
\[
    n-2-\frac{2q}{n}>-1.
\]
Equivalently,
\[
    q<\frac{n(n-1)}{2}.
\]
It follows that
\[
    u\in W_{\mathrm{loc}}^{2,q}(\Omega)
    \qquad\text{for every }
    1\leq q<\frac{n(n-1)}{2}.
\]
At the endpoint
\[
    q=q_*:=\frac{n(n-1)}{2},
\]
the exponent in \eqref{eq:q-integrability} is exactly $-1$:
\[
    n-2-\frac{2q_*}{n}
    =
    -1.
\]
Thus,
\[
    \int_0^\rho
    r^{n-2-\frac{2q_*}{n}}\,dr
    =
    \int_0^\rho\frac{dr}{r}
    =
    +\infty.
\]
The lower estimate in \eqref{eq:hessian-two-sided} therefore yields $u\notin  W_{\mathrm{loc}}^{2,q_*}(\Omega).$

\section{The gradient mapping is neither discrete nor open}\label{dgjhkls}
Since the first derivatives of $u$ are
\begin{equation}\label{eq:gradient}
    D_{x'}u
    =\alpha r^{\alpha-2}h(t)x',
    \qquad
    u_t=r^\alpha h'(t).
\end{equation}
As $\alpha>1$, all these first derivatives tend to zero as $r\to0$. Thus,
$u$ and its first derivatives extend continuously across the singular axis, and
\begin{equation}\label{eq:axis-gradient}
    Du(0,t)=0
    \qquad(|t|<\varepsilon).
\end{equation}
By \eqref{eq:axis-gradient}, the entire nondegenerate line segment
\[
    \Sigma=\{0\}\times(-\varepsilon,\varepsilon)
\]
is mapped by $Du$ to the origin. That is, $\Sigma\subset (Du)^{-1}(0).$
Consequently, $(Du)^{-1}(0)$ is not a discrete set, and $Du$ is not a discrete
mapping. 

We next prove that $Du$ is not open. Since $\alpha |x'|^{\alpha-2}e^{t^2}>0$ whenever $x'\neq0,$ the first derivatives identity in \eqref{eq:gradient} implies
\[
    D_{x'}u(x',t)=0
    \quad\Longleftrightarrow\quad
    x'=0.
\]
When $x'=0$, however, \eqref{eq:gradient}  gives $ u_t(0,t)=0.$
It follows that
\begin{equation}\label{eq:image-misses-axis}
    Du(\Omega)\cap
    \bigl(\{0\}^{n-1}\times\mathbb{R}\bigr)
    =Du(\Omega)\cap\{(0,…,0,s):s\in\mathbb{R}\}=
    \{0\}.
\end{equation}

Now fix a point
\[
    x_0=(0,t_0)\in
    \{0\}\times(-\varepsilon,\varepsilon)
\]
and let $U\subset\Omega$ be any open neighborhood of $x_0$. By
\eqref{eq:axis-gradient},  $Du(x_0)=0,$
and hence $0\in Du(U)$. If $Du(U)$ were open, it would contain an open
ball centered at the origin. In particular, it would contain points of
the form
\[
    (0,\ldots,0,s),
    \qquad s\neq0,
\]
for all sufficiently small $|s|$. This contradicts
\eqref{eq:image-misses-axis}. Therefore $Du(U)$ is not open, and $Du$
is not an open mapping. So, we conclude that $Du$ is neither open nor discrete.

\section{Proof of Theorem~\ref{sjklksx}}\label{scjsk}
\begin{proof}[Proof of Theorem~\ref{sjklksx}]
Take the function and the domain defined in \eqref{eq:def-u}--\eqref{eq:domain},
with $\varepsilon$ chosen according to \eqref{eq:epsilon}. By
\eqref{eq:det-lower}, $\det D^2u\geq\delta>0$ a.e. $\Omega.$ By the discussion in Section \ref{jsxk;}, we have that $u:\Omega\rightarrow \mathbb{R},$ such that 
$ u\in W_{\mathrm{loc}}^{2,q}(\Omega)$
    for every
    $1\leq q<n(n-1)/2,$ $ u\notin
    W_{\mathrm{loc}}^{2,n(n-1)/2}(\Omega).$ In Section \ref{dgjhkls}, we see that $Du$ is neither open nor discrete. Hence, the proof of Theorem~\ref{sjklksx} is completed.
\end{proof}

\section{The critical role of the dimension}
Here we consider the case that $q=n.$ Then, from (\ref{eq:q-integrability}) we see that the critical integral for the construction is
$$\int_0^\rho r^{n-4}\,dr.$$
Consequently:
\begin{itemize}[leftmargin=2em]
    \item If $n\geq4$, the integral is finite, and the counterexample belongs to
    $W^{2,n}_{loc}$.

    \item If $n=3$, the integral becomes
    \[
        \int_0^\rho\frac{dr}{r}=+\infty,
    \]
    so the same Pogorelov-type function does not belong to
    $W^{2,3}_{loc}$. Hence, Question 5.5 of Guerra and Tione in \cite{GuerraTione} is still open in the case that $n=3.$

    \item If $n=2$, $u\in W^{2,2}_{loc}$ and
    $\det D^2u\geq\delta>0$ imply that
    $f=Du\in W^{1,2}_{loc}$ is a planar mapping of finite distortion with
    \[
        K_f=\frac{|D^2u|^2}{\det D^2u}
        \leq\delta^{-1}|D^2u|^2\in L^1_{loc}.
    \]
    The openness and discreteness theorem \cite{IS93}  for planar mappings of integrable distortion then shows that $Du$ is open and discrete.
\end{itemize}

The dimensional picture furnished by this argument is therefore
\[
\begin{array}{c|c}
    n & \text{Conclusion concerning Question \ref{adgjak}}\\ \hline
    2 & \text{Affirmative: by the planar theory of integrable distortion}\\
    3 & \text{Still open}\\
    n\geq4 & \text{Negative: by the Pogorelov-type counterexample above}
\end{array}
\]

\medskip
\noindent\textbf{Acknowledgments.}
This work was supported by Guangdong Basic and Applied Basic Research Foundation (No. 2022A1515110967 and No. 2023A1515011809). The main result of this paper is obtained by GPT-5.6-sol. All mathematical arguments and proofs in the final manuscript were checked and written by the authors.

\end{document}